\documentclass{amsart}

\usepackage{todonotes}
\usepackage{hyperref}
\usepackage{amsmath,amssymb,tikz}
\usetikzlibrary{arrows,shapes,positioning,decorations.markings}
\usepackage{verbatim}

\let\emph\relax 
\DeclareTextFontCommand{\emph}{\bfseries\em}

\newtheorem{theorem}{Theorem}[section]
\newtheorem{lemma}[theorem]{Lemma}

\renewcommand{\wr}{\mathop{\mathrm{wr}}}

\newcommand{\Sym}{\mathop{\mathrm{Sym}}}
\newcommand{\Cay}{\mathop{\mathrm{Cay}}}
\newcommand{\Aut}{\mathop{\mathrm{Aut}}}

\newcommand{\CM}{\mathop{\mathrm{CM}}}
\newcommand{\rr}{\mathfrak{r}}
\numberwithin{equation}{section}

\def\cent#1#2{{\bf C}_{#1}(#2)}

\begin{document}
\title[Mapical regular representations]{Almost all cayley maps are mapical regular representations}

\author[P. Spiga]{Pablo Spiga}
\address{Pablo Spiga,
Dipartimento di Matematica e Applicazioni, University of Milano-Bicocca,\newline
Via Cozzi 55, 20125 Milano, Italy}\email{pablo.spiga@unimib.it}

\author[D. Sterzi]{Dario Sterzi}
\address{Dario Sterzi,
Dipartimento di Matematica \lq\lq Tullio Levi-Civita\rq\rq,\newline
 University of Padova, Via Trieste 53, 35121 Padova, Italy
}\email{dario.sterzi@studenti.unipd.it}

\begin{abstract}
Cayley maps are combinatorial structures built upon Cayley graphs on a group. As such the original group embeds in their group of automorphisms, and one can ask in which situation the two coincide (one then calls the Cayley map  a mapical regular representation or MRR) and with what probability. The first question was answered by Jajcay. In this paper we tackle the probabilistic version, and prove that as groups get larger the proportion of MRRs among all Cayley Maps approaches 1.
\end{abstract}

\keywords{regular representation, Cayley map, automorphism group, asymptotic enumeration, graphical regular representation, GRR}
\subjclass[2010]{05C25, 05C30, 20B25, 20B15}
\maketitle
\section{Introduction}\label{intro}
In this first section we define Cayley graphs and maps, give some context and state our main theorem. In the second section we prove the theorem.
\subsection{Cayley graphs}
We consider only finite groups and finite graphs in this paper. As usual a \textbf{\textit{graph}} $\Gamma$ is an ordered pair $(V, E)$ with $V$ a finite non-empty set and with $E$ a collection of $2$-subsets of $V$.  An \textit{\textbf{automorphism}} of a graph is a permutation on $V$ that preserves the set $E$, and a \emph{path} on a graph is a sequence $v_1,v_2,\dots, v_n$ of adjacent vertices, i.e. $\{v_i,v_{i+1}\}\in E $ for all $i$. The \textbf{\textit{neighbourhood}} of a vertex $v$ is the set $\Gamma (v)=\{w\in V| \{v,w\}\in E\}$ of all vertices connected to it by an edge.

Let $R$ be a group and let $S$ be an inverse-closed subset of $R$, that is, $S=\{s^{-1}\mid s\in S\}$. The \textit{\textbf{Cayley graph}} $\mathrm{Cay}(R,S)$ is the graph with $V=R$ and with $\{r,t\}\in E$ if and only if $tr^{-1}\in S$, i.e. $E=\{\{r,sr\}| s\in S, r\in R\}$. The condition $S=S^{-1}$ is imposed to guarantee that $tr^{-1}\in S$ if and only if $rt^{-1}\in S$.
A path $r_0,r_1 ,\dots,r_n$ in a Cayley graph can be specified equivalently by its starting vertex $r_0$ together with the unique sequence of elements $s_1,s_2,
\dots , s_n$ from $S$ such that $r_{i+1}=s_{i+1}r_i$.
Usually one is interested in connected Cayley graphs, where for any two vertices there is at least one path connecting them. This is equivalent to the requirement that $S$ is a set of generators for the group. We shall assume so throughout this paper.

A \textit{\textbf{graphical regular representation}} (GRR) for a group $R$ is a graph whose automorphism group is the group $R$ acting regularly on the vertices of the graph. (A permutation group $R$ is \textit{\textbf{regular}} if it is transitive and if the identity element of $R$ is the only element fixing some point of the domain.) It is an easy observation that the right regular action of  $R$ on itself preserves the edges, so $R$ embeds in $\Aut(\mathrm{Cay}(R,S))$.\footnote{We let automorphisms act on the right, so we will write $x^\varphi$ to denote the image of  the vertex $x$ under the automorphism $\varphi$, and we shall take $x^{\varphi \psi}$ to mean $(x^\varphi)^\psi$.} A GRR for $R$ is therefore a Cayley graph on $R$ that admits no other automorphisms.

The main thrust of much of the work through the 1970s was to determine which groups admit GRRs. This question was ultimately answered by Godsil in~\cite{8}. It was conjectured by Babai and Godsil  that, except for two natural families of groups,  GRRs not only exist, but they are abundant, that is, with probability tending to $1$ as $|R|\to\infty$, a Cayley graph on $R$ is a GRR.  The first author reported the recent progress in~\cite{spiga1,spiga2,spiga3,spiga4} on the Babai-Godsil conjecture at the SIGMAP 2022 conference at the University of Alaska Fairbanks. During this conference, Robert Jajcay has suggested  a similar investigation for Cayley maps.\footnote{During the preparation of this paper, Xia and Zheng have announced a solution to the Babai-Godsil conjecture, see~\cite{xia2022}.}
We now give some background on Cayley maps, state Jajcay's question and state our main result.

\subsection{Graph maps and Cayley maps}
Let $\Gamma:=(V,E)$ be a graph. Given $v\in V,$ we let $\Gamma(v)$ denote the neighbourhood of $v$ in $\Gamma$. A \textit{\textbf{rotation}} on $\Gamma$ is a set $\rho:=(\rho_v)_{v\in V}$, where each $\rho_v:\Gamma(v)\to \Gamma(v)$ is a cyclic ordering \footnote{A cyclic ordering on a (finite) set is a permutation with no fixed points and a single cycle in its cycle decomposition. } of $\Gamma(v)$. A \textit{\textbf{map}} is a pair $(\Gamma,\rho)$, where $\Gamma$ is a connected graph and $\rho$ is a rotation of $\Gamma$.

The idea behind maps is that they represent a CW complex structure on an orientable surface whose 1-skeleton is the given graph, essentially an embedding of the graph in an orientable surface disconnecting it into disks. See for instance \cite{Gross} for  details. The $\rho_v$ are the cyclic orderings of the edges incident to $v$ in the embedding. 

Intuitively, an automorphism of a map $(\Gamma,\rho)$ is a pair: an automorphism of the graph and an  oriented homeomorphism of the surface that are compatible trough the embedding. 
Combinatorially this translates to an automorphism of $\Gamma$ (a permutation of the vertices preserving the edges) which also preserves the rotation $\rho$. 
In order to make this idea precise, we make a slight detour. 
Let $\mathrm{Aut}(\Gamma)$ be the automorphism group of $\Gamma$ and let $R(\Gamma)$ be the collection of all rotations of $\Gamma$. 
Now, $\mathrm{Aut}(\Gamma)$ has a natural action on $R(\Gamma)$:
\begin{alignat*}{2}
R(\Gamma)\times\mathrm{Aut}(\Gamma)&\longrightarrow&R(\Gamma)\\
(\rho,\varphi)&\longmapsto &\rho^{(\varphi)},
\end{alignat*}
where $\rho_{v^\varphi}^{(\varphi)}=\varphi^{-1}\rho_v\varphi$, for all $v\in V$. In other words, the rotation $\rho^{(\varphi)}$ at the vertex $v^\varphi$ takes $u^\varphi$ to $w^\varphi$ when $\rho_v$ takes $u$ to $w$.
Now, an \textit{\textbf{automorphism}} of a map $M=(\Gamma,\rho)$ is an automorphism $\varphi$ of the graph $\Gamma$ such that $\rho^{(\varphi)}=\rho$, that is, 
\begin{equation}\label{eq:mapaut}
  \rho_{v^\varphi}=\varphi^{-1}\rho_v\varphi, \ \text{ for each $v$ vertex of } \Gamma.
\end{equation} 
It is well known~\cite{Biggs} that, if the underlying graph is connected, a map automorphism is determined uniquely by its value on an oriented edge (i.e. an ordered pair of adjacent vertices). 
We recall briefly the reason: suppose $\varphi$ is a map automorphism, $w_0,w_1$ are adjacent vertices mapped to $w_0^\varphi$ and $w_1^\varphi$ respectively and $w_0,w_1,\dots,w_t$ is a path in the graph. We can describe the path as a sequence of left and right turns, or with a closer analogy as the exits to take at consecutive roundabouts.
There must be natural numbers $n_i$ for $i \in \{1,\dots,t-1\}$ such that $w_{i+1}=w_{i-1}^{\rho_{w_i}^{n_i}}$. 
Thus the path $\varphi(w_0),\varphi(w_1),\dots,\varphi(w_t)$ is uniquely determined  by the relations
\begin{equation*}
    w_{i+1}^\varphi=w_{i-1}^{\varphi\rho_{w_i^\varphi}^{n_i}}
    \text{ for }i \in \{1,\dots,t-1\}\text{.}
\end{equation*}
In other words the automorphism group of a map on a connected graph acts semiregularly on the set of oriented edges.

Let now $R$ be a group and $S$ as above an inverse-closed set of generators excluding the identity.
For every cyclic ordering $\mathfrak{r}:S\to S$, we define the \textit{\textbf{Cayley map}} 
$CM(R,S,\mathfrak{r})=(\Gamma,\rho)$ as follows: $\Gamma$ is the Cayley graph $\mathrm{Cay}(R,S)$ and, for every $g\in R$ and for every $x$ lying in the neighbourhood $\Gamma(g)$ of the vertex $g$,
\begin{alignat*}{3}\label{def:cayleyrotation}
\rho_g: & \Gamma(g)&\longrightarrow&\Gamma(g)\\
&x&\longmapsto &\rho_g(x):=g\mathfrak{r}(g^{-1}x).
\end{alignat*}
This is the unique map with the prescribed rotation $\mathfrak{r}$ around the identity vertex $e\in R$ such that the right regular action of the group on the Cayley graph preserves the rotation.

Combinatorially, we may think of a Cayley map as just a triple $(R,S,\mathfrak{r})$, where 
\begin{itemize}
\item $R$ is a finite group, 
\item $S\subseteq R\setminus\{e\}$ is a generating set with $S=S^{-1}$, and
\item $\mathfrak{r}:S\to S$ is a cyclic ordering.
\end{itemize}

\subsection{Mapical regular representations and the question of Jajcay}
Given a Cayley map $CM(R,S,\rho)$, the right regular representation of $R$ is contained in the automorphism group of $CM(R,S,\mathfrak{r})$. Analogously to GRRs, we say that $CM(R,S,\mathfrak{r})$ is a \textit{\textbf{mapical regular representation}} (or MRR for short) if  
$$\mathrm{Aut}(CM(R,S,\mathfrak{r}))\cong R.$$
As far as we are aware, this definition was coined by Robert Jajcay in~\cite{Jajcay}. Theorem~7 in~\cite{Jajcay} shows that each finite group not isomorphic to $\mathbb{Z}_3$ or $\mathbb{Z}_2^2$ possesses an MRR. Observe that $CM(R,S,\mathfrak{r})$ is a MRR if and only if the only automorphism of $CM(R,S,\mathfrak{r})$ fixing a vertex is the identity.

Once that the existence of MRRs is established it is fairly natural to investigate the abundance of MRRs among Cayley maps. Indeed, Robert Jajcay has asked whether, as $|R|\to\infty$, the proportion of MRRs among Cayley maps on $R$ tends to $1$. There are various ways to approach this counting problem and in this paper we are only concerned with \textit{\textbf{labelled}} Cayley maps, where two Cayley maps $CM(R,S,\mathfrak{r})$ and $CM(R,S',\mathfrak{r}')$ over the same group are considered to be the same if and only if $S=S'$ and $\mathfrak{r}=\mathfrak{r}'$. We managed to answer the question positively.

\begin{theorem}\label{thm:main}
  As $|R|\to\infty$, the proportion of MRRs among labelled Cayley maps on $R$ tends to $1$.
\end{theorem}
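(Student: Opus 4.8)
Throughout write $N=|R|$ and, for a Cayley map on $R$, $n=|S|$. Since $S=R\setminus\{e\}$ is always an inverse-closed generating set, the number of labelled Cayley maps on $R$ is at least the number $(N-2)!$ of cyclic orderings of $R\setminus\{e\}$; hence it suffices to show that the number of labelled Cayley maps on $R$ that are \emph{not} MRRs is $o\big((N-2)!\big)$ as $N\to\infty$. First I would record the structure of the point stabiliser. Put $X=\mathrm{Aut}(CM(R,S,\mathfrak{r}))$. Because a map automorphism is determined by its image on one oriented edge, the restriction $\varphi\mapsto\varphi|_{\Gamma(e)}$ embeds $X_e$ into the centraliser in $\mathrm{Sym}(\Gamma(e))$ of the $n$-cycle $\rho_e|_{\Gamma(e)}=\mathfrak{r}$, which is cyclic of order $n$; thus $X_e$ is cyclic of order dividing $n$, and $CM(R,S,\mathfrak{r})$ is an MRR exactly when $X_e=1$. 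So if $CM(R,S,\mathfrak{r})$ is not an MRR there is a prime $p\mid n$ and $\varphi\in X_e$ of order $p$; its restriction to $S$ equals $\mathfrak{r}^{k}$ for some $k$ with $n/\gcd(k,n)=p$, hence is a fixed-point-free permutation of $S$ with $n/p$ cycles of length $p$, and $\varphi$ is the unique map automorphism with this restriction — write it $\mathrm{prop}(\mathfrak{r}^{k})$ (extend $(e,s)\mapsto(e,\mathfrak{r}^k(s))$ and propagate along walks).

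Next I would split according to the valency. Fix a small constant $\delta>0$, to be chosen last. For $n\le N-\delta N$ use the crude bound: at most all $(n-1)!$ rotations are bad, and there are at most $2^{N}$ inverse-closed subsets of $R$, so the number of non-MRR Cayley maps of valency at most $N-\delta N$ is at most $2^{N}(N-\delta N-1)!$. Since $(N-2)!/(N-\delta N-1)!\ge\big((1-\delta)N\big)^{\delta N-1}$ eventually dominates $2^{N}$, this contribution is $o\big((N-2)!\big)$.

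It remains to treat the dense range $N-\delta N<n\le N-1$. There are at most $2^{c(\delta)N}$ inverse-closed subsets $S$ of $R$ of this size, with $c(\delta)\to0$ as $\delta\to0$, so it is enough to prove the following \emph{key estimate}: there is an absolute constant $C>1$ such that for every finite group $R$ and every inverse-closed generating set $S$ with $|S|=n>N/2$, the number of cyclic orderings $\mathfrak{r}$ of $S$ for which $CM(R,S,\mathfrak{r})$ is not an MRR is at most $(n-1)!/C^{\,n}$. Granting this, the dense contribution is at most $2^{c(\delta)N}\cdot(N-2)!/C^{(1-\delta)N}=o\big((N-2)!\big)$ once $\delta$ is taken with $c(\delta)\ln 2<(1-\delta)\ln C$, which finishes the proof. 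To prove the key estimate, fix $k$ with $n/\gcd(k,n)$ prime, set $\varphi=\mathrm{prop}(\mathfrak{r}^{k})$, and (after a union bound over the relevant $k$) bound the number of $\mathfrak{r}$ for which $\varphi$ is a genuine map automorphism. For every triangle $e,s,ts$ of $\mathrm{Cay}(R,S)$ (that is, $s,t,ts\in S$), evaluating $\varphi(ts)$ directly and along the path $e\to s\to ts$ forces
\[
 \mathfrak{r}^{k}(ts)\cdot\big(\mathfrak{r}^{k}(s)\big)^{-1}\;=\;\mathfrak{r}^{\,c_s}(t),\qquad c_s:=\text{ number of $\mathfrak{r}$-steps from }s^{-1}\text{ to }\mathfrak{r}^{k}(s)^{-1},
\]
a relation reminiscent of the defining identity of a skew-morphism of $R$.

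When $n>N/2$ each vertex of $\mathrm{Cay}(R,S)$ lies in $\Theta(N^2)$ triangles, so the collection of relations above is massively overdetermined; the plan is to extract from it a sequence of $\Theta(n)$ relations that successively pin the cyclic position of a fresh element of $S$ down to boundedly many choices, yielding the bound $(n-1)!/C^{\,n}$ uniformly in $R$. Carrying out this rigidity argument — showing that the web of triangle (and, if needed, short-cycle) relations really is rigid, uniformly over all finite groups and all dense generating sets, and in particular that only exponentially few rotations $\mathfrak{r}$ admit a nontrivial power extending to a map automorphism — is the heart of the matter, and the step I expect to be the main obstacle; the rest is bookkeeping with factorials and entropy estimates. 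A possibly cleaner implementation of the same step is to identify $\varphi$ with a skew-morphism of $R$ whose orbits are permuted compatibly with $\mathfrak{r}$, and to bound directly the number of $n$-cycles that refine such a skew-morphism.
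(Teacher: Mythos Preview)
Your outline has a genuine gap: the ``key estimate'' for the dense range is asserted but not proved. You yourself flag the triangle/skew-morphism rigidity argument as ``the main obstacle,'' and indeed nothing in the proposal establishes that only $(n-1)!/C^{\,n}$ cyclic orderings of a fixed dense $S$ can have a nontrivial power extending to a map automorphism. The difficulty is structural: in your setup the candidate automorphism $\varphi=\mathrm{prop}(\mathfrak{r}^{k})$ \emph{depends on $\mathfrak{r}$}, so you are not fixing an automorphism and counting compatible rotations, but rather counting $\mathfrak{r}$ for which some $\mathfrak{r}$-dependent permutation of $R$ happens to be consistent. The triangle relations you write down are the skew-morphism identities, but turning ``massively overdetermined'' into an actual exponential saving, uniformly over all groups and all dense $S$, is not routine and you have not indicated how to do it. As it stands the proof is a plan with its central lemma missing.

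For comparison, the paper avoids this circularity by reversing the quantifiers. Instead of fixing $S$ and counting bad $\mathfrak{r}$, it fixes the \emph{overgroup} $G$ with $R<G\le\mathrm{Sym}(r)$ and $G_1$ cyclic (equivalently, fixes the nontrivial automorphism $\gamma$ generating $G_1$), and then counts all pairs $(S,\mathfrak{r})$ compatible with that $\gamma$. The second count is easy: $S$ must be a union of $\gamma$-orbits and $\mathfrak{r}$ must centralise $\gamma|_S$, giving at most $(r-1)\tfrac{r}{2}\lfloor r/2\rfloor!\,2^{r}$ pairs. The first count---how many such overgroups $G$ exist---is the substantive step, and the paper handles it with Lubotzky's enumeration of $d$-generated groups of bounded order, obtaining at most $2^{7(\log_2 r)^2+12\log_2 r}$ overgroups. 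Multiplying and dividing by the trivial lower bound $(r-2)!$ finishes the argument. This decoupling of ``which $\gamma$'' from ``which $(S,\mathfrak{r})$'' is exactly what your approach lacks, and it replaces your unproved rigidity statement by an off-the-shelf group-theoretic enumeration.
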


Xia and Zheng~\cite{xia2022} have recently announced a positive solution of the Babai-Godsil conjecture. This means that, except for abelian groups of exponent greater than $2$ and for generalized dicyclic groups, with probability tending to $1$ as $|R|\to\infty$, a Cayley graph on $R$ is a GRR. There are some relations between our work and the work in~\cite{xia2022}, for instance, both results depend upon a theorem on group generation due to Lubotzky~\cite{Lub}. However, there is no direct implication between our Theorem~\ref{thm:main} and the main result in~\cite{xia2022}; for instance, a positive solution of the Babai-Godsil conjecture does not imply the veracity of Theorem~\ref{thm:main}. Indeed, the number of Cayley maps on a fixed Cayley graph $\Cay(R,S)$ is $(|S|-1)!$, thus most Cayley maps have almost all the group as connection set of the underlying Cayley graph, while a random Cayley graph has roughly $|R|/2$ elements in its connection set. 
More precisely: the two questions consider different marginal probability distributions on the space of Cayley graphs.

\section{proof of main theorem}
In this section, we let $R$ be a finite group and we let $r$ denote its order.

We explore the inclusions $R \leq \Aut(\CM(R,S,\mathfrak{r}))\leq \Sym(R)$. 
Our strategy is proving a necessary condition for intermediate subgroups between $R$ and $\Sym(R)$ to be automorphism groups of Cayley maps, bound the number of subgroups satisfying this condition and then bound the number of pairs $(S,\mathfrak{r} )$ compatible with at least one of them. 

The following lemma is essentially a restatement of insights in~\cite{Jajcay93}.
\begin{lemma}\label{Necessary condition}
  For any  Cayley map $CM(R,S,\mathfrak{r})$,  the stabilizer $\Aut(CM(R,S,\mathfrak{r}))_e$ of the identity vertex $e$  is  cyclic of order less than $|R|$. If $\Aut(CM(R,S,\mathfrak{r}))_e=\langle \gamma \rangle$, then $S^\gamma=S$ and the restriction $\gamma|_S$ has the same order as $\gamma$ and it is a power of $\rr$.
\end{lemma}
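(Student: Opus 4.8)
The plan is to understand an arbitrary automorphism $\varphi$ of $\CM(R,S,\rr)$ fixing the vertex $e$ entirely through its restriction to the neighbourhood $\Gamma(e)$, which is exactly $S$, and then to recover $\varphi$ itself from that restriction by invoking semiregularity on oriented edges. First I would note that, being in particular a graph automorphism fixing $e$, such a $\varphi$ permutes $\Gamma(e)=S$, so $S^\varphi=S$, and restriction therefore defines a group homomorphism $\Aut(\CM(R,S,\rr))_e\to\Sym(S)$, $\varphi\mapsto\varphi|_S$.

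Next, I would evaluate the map-automorphism identity~\eqref{eq:mapaut} at $v=e$: since $e^\varphi=e$ this gives $\rho_e=\varphi^{-1}\rho_e\varphi$, and as $\rho_e$ acts on $\Gamma(e)=S$ precisely as $\rr$, this says that $\varphi|_S$ centralises $\rr$ in $\Sym(S)$. Because $\rr$ is a single $|S|$-cycle, its centraliser in $\Sym(S)$ is the cyclic group $\langle\rr\rangle$ of order $|S|$, so $\varphi|_S$ is a power of $\rr$. Injectivity of the restriction homomorphism then follows from semiregularity: if $\varphi|_S$ is the identity, then the oriented edge $(e,s)$ is fixed for any $s\in S$, and since the automorphism group of a map on a connected graph acts semiregularly on oriented edges we conclude $\varphi=\mathrm{id}$. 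Hence $\Aut(\CM(R,S,\rr))_e$ embeds, via restriction, as a subgroup of the cyclic group $\langle\rr\rangle$; it is therefore cyclic, of order dividing $|S|\le|R|-1<|R|$. Writing it as $\langle\gamma\rangle$, we have already seen that $S^\gamma=S$ and that $\gamma|_S$ is a power of $\rr$; and since restriction is an isomorphism onto its image, $\gamma|_S$ has the same order as $\gamma$.

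I do not expect any genuinely hard step here: the substance is the three elementary observations that $\rho_e$ is just $\rr$ on $S$, that the centraliser of a full cycle in a symmetric group is the cyclic group it generates, and that semiregularity on oriented edges upgrades triviality of $\varphi|_S$ to triviality of $\varphi$. Getting these assembled correctly — in particular being careful that the restriction homomorphism is used simultaneously to transfer the cyclic structure and to bound the order — is the only point that needs attention.
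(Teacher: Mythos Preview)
Your proof is correct and follows essentially the same route as the paper's: restrict to $S=\Gamma(e)$, use~\eqref{eq:mapaut} at $v=e$ to land in $\cent{\Sym(S)}{\rr}=\langle\rr\rangle$, and invoke semiregularity on oriented edges for injectivity. If anything, you are slightly more explicit than the paper in spelling out why the order is strictly less than $|R|$ (via $|S|\le|R|-1$) and why $\gamma|_S$ has the same order as $\gamma$ (restriction is an isomorphism onto its image).
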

\begin{proof}
  An automorphism fixing $e$ sends its neighbourhood $\Gamma(e)=S$ to itself.
  
  Since the action on oriented edges is semiregular, an element of the stabilizer is uniquely determined by its action on $S$, i.e. the restriction mapping 
  \begin{alignat*}{2}
    \Aut(CM(R,S,\mathfrak{r}))_e&\longrightarrow&\Sym(S)\\
     \varphi&\longmapsto& \varphi|_S
  \end{alignat*} 
  is injective.

  Moreover, if $\varphi\in\Aut(CM(R,S,\mathfrak{r}))_e$, then from~\eqref{eq:mapaut} we have $\mathfrak{r}=\varphi^{-1}\mathfrak{r}\varphi$, i.e., $\varphi|_S\in \cent{\Sym(S)}{\rr}$.
  From standard computations in permutation groups, we have $\cent{\Sym(S)}{\rr}=\langle \rr\rangle$. Thus $\Aut(CM(R,S,\rr))_e$ is isomorphic to a subgroup of a cyclic group, hence $\Aut(CM(R,S,\rr))_e$ is cyclic and all its elements restricted to $S$ are powers of $\rr$.
\end{proof}
Until now, we have adopted the view that a group $R$ with $r$ elements can be embedded into $\Sym(r)$ using the usual  right regular representation. It is convenient for our exposition to consider the equivalent formulation ``$R$ is a regular subgroup of $\Sym(r)$'', here regular means that for any two points in $\{1,\dots , r\}$ there exists a unique permutation in $R$ sending the first to the second. 

\begin{lemma}\label{lemma2}
  For  every regular subgroup $R$ of $\mathrm{Sym}(r)$, the number of subgroups $G$ of $\mathrm{Sym}(r)$ with
  \begin{itemize}
    \item $R<G$ and
    \item $G_1$  cyclic  and $|G_1|\le r-1$  (where $G_1$ is the stabiliser of $1$ in $G$)
  \end{itemize} is at most $2^{7(\log_2r)^2+12\log_2r}$.
  \end{lemma}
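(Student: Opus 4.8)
The plan is to bound the number of such groups $G$ by controlling two pieces of data: the overgroup structure $R < G$ and the cyclic point stabiliser $G_1$. First I would invoke a theorem of Lubotzky (cited in the paper) on the number of generators needed for a group: since $R$ is transitive of degree $r$, a subgroup $G$ with $R \le G \le \Sym(r)$ is generated by $R$ together with at most $O(\log r)$ further elements — more precisely, $G$ is generated by $R$ and $d$ elements where $d \le c \log_2 r$ for an explicit constant. Actually the cleaner route is: any transitive group of degree $r$ is generated by at most $\lfloor \log_2 r \rfloor$ elements together with a point stabiliser, or one uses Lubotzky's bound that a transitive group of degree $r$ can be generated by $O(\log r / \sqrt{\log\log r})$ elements; for our purposes a bound of the shape $d(G) \le a\log_2 r$ suffices. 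So $G = \langle R, g_1, \dots, g_d \rangle$ with $d \le a \log_2 r$.

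The key reduction is that $G$ is determined by $R$ (which is fixed) together with the coset $Rg_i$ of each generator — equivalently, by the images of the $g_i$ in the coset space, but more usefully: $G$ is determined by the subgroup $G_1 \le \Sym(r)_1$ together with $R$, since $G = R\,G_1$ as $R$ is transitive and hence $G = RG_1$ whenever $R \le G$. Wait — that gives $G$ as a product $RG_1$, so $G$ is \emph{completely} determined by the pair $(R, G_1)$, and $R$ is fixed. Therefore it suffices to bound the number of possibilities for $G_1$, which is a cyclic subgroup of $\Sym(r)_1 \cong \Sym(r-1)$ of order at most $r-1$. A cyclic subgroup is generated by a single element, so naively there are at most $(r-1)!$ choices, which is far too many; the improvement must come from the constraint that $RG_1$ is actually a \emph{group}, equivalently that $G_1$ normalises-enough / that $R$ is normalised appropriately inside $RG_1$. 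Concretely, $RG_1$ is a group iff $G_1 R = R G_1$, iff $G_1 \le N_{\Sym(r)}(R) \cdot$(something); the cleanest sufficient handle is that $G_1$ permutes the set of $R$-cosets compatibly, so each generator of $G_1$, as a permutation of $\{1,\dots,r\}$, is constrained to a set of size roughly $|N_G(R) : R| \cdot |R|$, and one shows $|G:R|$ — hence $|G_1|$ — divides a highly structured number, or that $G_1 \le N_{\Sym(r)}(R)_1$ when $R \trianglelefteq G$. Pinning down exactly which $G_1 \le \Sym(r-1)$ of order $\le r-1$ yield a group $RG_1$, and counting them, is the heart of the matter.

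I would therefore argue: the number of cyclic subgroups of order $k$ in $\Sym(r-1)$ that can arise is at most the number of elements of order $k$ lying in a single conjugacy class worth of "admissible" permutations; using $|G:R| = |G_1| \le r-1 < r$ and the generation bound, $G = \langle R, \gamma \rangle$ for a \emph{single} $\gamma$ generating $G_1$, and $\gamma$ is determined up to its action, which in turn is pinned down once we know (i) the cycle type of $\gamma$ — at most $p(r-1) \le 2^{O(\sqrt r)}$ choices, still too weak, so instead (ii) we use that $\gamma$ normalises $R$ modulo inner stuff: $\gamma$ acts on $R$ by an automorphism composed with the regular action, giving at most $|\Aut(R)| \le 2^{(\log_2 r)^2}$ choices for the "conjugation type", times at most $r-1$ choices for the order, times at most $r$ choices for a base point, yielding the stated bound $2^{7(\log_2 r)^2 + 12\log_2 r}$ after collecting the $(\log_2 r)^2$ term from $|\Aut(R)| \le r^{\log_2 r} = 2^{(\log_2 r)^2}$ and absorbing the polynomial factors into the $12\log_2 r$ term (with room to spare for the constant $7$). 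The main obstacle I anticipate is step (ii): making precise that a generator $\gamma$ of the cyclic stabiliser $G_1$ of an overgroup $G = R\langle\gamma\rangle$ is essentially determined by an element of $\Aut(R)$ together with a bounded amount of extra data. This should follow from writing $G$ as acting on $R$ and noting $\gamma$ conjugates $R$ into itself (if $R \trianglelefteq G$) — but one must also handle the case $R \not\trianglelefteq G$, where I would pass to $\mathrm{core}_G(R)$ or iterate the overgroup analysis, or invoke that $R$ regular with a \emph{cyclic} point stabiliser forces strong restrictions (via Burnside / the O'Nan–Scott-type analysis of groups with cyclic point stabilisers) that either make $R$ normal or bound $|G:R|$ directly.
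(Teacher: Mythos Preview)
Your reduction $G=RG_1$ is correct, and so is the bound $|\Aut(R)|\le 2^{(\log_2 r)^2}$; both appear in the paper's proof as well. But the central step you propose --- that a generator $\gamma$ of $G_1$ is ``essentially determined by an element of $\Aut(R)$ together with a bounded amount of extra data'' --- is only justified when $R\trianglelefteq G$, since precisely then $G_1\le N_{\Sym(r)}(R)_1=\Aut(R)$. You flag this yourself, but the remedies you list (pass to $\mathrm{core}_G(R)$, iterate, invoke a structural classification of transitive groups with cyclic point stabiliser) are not worked out, and the first two do not obviously terminate with the stated bound: the core can be trivial, and an iteration would have to track constants through an unbounded number of steps. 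As written, the non-normal case is a genuine gap, not a detail.

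The paper's argument avoids the normal/non-normal dichotomy altogether and proceeds quite differently. It first bounds the number of \emph{abstract} pairs $[G,H]$ with $G$ at most $(\log_2 r+1)$-generated, $|G|\le r(r-1)$, and $H$ cyclic, using Lubotzky's enumeration of $d$-generated groups of bounded order; this gives at most $2^{4(\log_2 r)^2+12\log_2 r}$ classes. It then shows that each class $[G,G_1]$ can be realised inside $\Sym(r)$ with $R\le G$ in at most $2^{3(\log_2 r)^2}$ ways: two realisations are conjugate in $\Sym(r)$, the conjugating element can be taken to fix $1$, the image of $R$ under it is one of at most $|G|^{\log_2 r}\le 2^{2(\log_2 r)^2}$ regular subgroups of (one copy of) $G$, and finally distinct conjugators giving the same image of $R$ differ by an element of $\Aut(R)$, contributing a further factor of $2^{(\log_2 r)^2}$. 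So $\Aut(R)$ enters the paper's proof too, but at the very end and with no normality assumption on $R$.
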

\begin{proof}
  Given $G$ and $G'$ two  abstract groups and $H\leq G$, $H'\leq G'$, we write $(G,H)\sim (G',H')$ if there exists a group isomorphism $\phi:G\to G'$ with $H^\phi=H'$. Clearly, $\sim$ defines an equivalence relation. We denote by $[G,H]$ the $\sim$-equivalence class containing $(G,H)$. Now consider 
\begin{equation*}
\mathcal{M}=\{[G,H]\mid G \textrm{ is }(\log_2r+1)\textrm{-generated}, H\leq G, |G|\leq r(r-1), \textrm{ and }H \textrm{ is cyclic}\}.
\end{equation*} 

\smallskip

\noindent\textsc{Claim 1: }We have
\begin{equation}\label{eq2}
|\mathcal{M}|\leq 2^{4(\log_2(r))^2+12\log_2r}.
\end{equation}

\smallskip

  \noindent  From~\cite[Theorem~$1$]{Lub}  together with~\cite[Remark~$3$(1)]{Lub} we get that the number of isomorphism classes of groups of order $N$ that are $d$-generated is at most $N^{2(d+1)\log_2(N)}=2^{2(d+1)(\log_2(|N|))^2}$. In particular, applying this theorem with $d:=\log_2(r)+1$ and with $N\leq r(r-1)$, we get that the number of groups $G$ that are $(\log_2(r)+1)$-generated and of order at most $r(r-1)$ is at most $2^{4(\log_2(r)+2)\log_2r}\cdot r^2$ (observe that the second factor counts the number of choices for $N$: the cardinality of $G$).
Now, let $G$ be a group of order at most $r(r-1)$. Since $G$ has at most $|G|<r^2$ cyclic subgroups $H$, our claim is proved.~$_\blacksquare$

\smallskip

Now, let $R$ be a regular subgroup of $\mathrm{Sym}(r)$ and let $\mathcal{S}_R$ be the set of subgroups $G$ of $\mathrm{Sym}(r)$ with $R<G$, with $G_1$ cyclic and with $|G_1|\le r-1$. Since $G=RG_1$ and since $R$, as any group of order $r$,  needs at most $\log_2 r$ generators, we deduce that $G$ needs at most $\log_2(r)+1$ generators.  

 \smallskip
 
  \noindent\textsc{Claim 2:}  We have 
\begin{equation}\label{eq1}
|\mathcal{S}_R|\leq 2^{3(\log_2r)^2}|\mathcal{M}|.
\end{equation}

\smallskip

\noindent  Every $G\in\mathcal{S}_R$  determines an element of $\mathcal{M}$ via the mapping $\varphi:G\mapsto [G,G_1]$. 

We show that there are at most $2^{3(\log_2r)^2}$ elements of $\mathcal{S}_R$ having the same image via $\varphi$, from which~\eqref{eq1} immediately follows. We argue by contradiction and we let $G^1,\ldots,G^\ell\in\mathcal{S}_R$ with $\varphi(G^i)=\varphi(G^1)$, for every $i\in \{1,\ldots,\ell\}$, where $\ell>2^{3(\log_2r)^2}$. Thus there exists a group isomorphism $\phi_i:G^1\to G^i$ with $(G^i)_1=((G^1)_1^{\phi_i}$. Therefore the permutation representation of $G^1$ on the coset space $G^1/(G^1)_1$  is permutation isomorphic to the permutation representation of $G^i$ on the coset space $G^i/(G^i)_1$. Thus $G^1$ and $G^i$ are conjugate via an element of $\mathrm{Sym}(r)$, that is, $G^1=(G^i)^{\sigma_i}$ for some $\sigma_i\in \mathrm{Sym}(r)$. Now, as $G^1$ acts transitively on $\{1,\ldots,r\}$, replacing $\sigma_i$ by an element of the form $g_i\sigma_i$ (for some $g_i\in G^1$), we may assume that $\sigma_i$ fixes $1$, that is, $1^{\sigma_i}=1$.

As $R\leq G^i$ for every $i$, we get that $R^{\sigma_1},\ldots,R^{\sigma_\ell}$ are $\ell$ regular subgroups of $G^1$. 
Since $R$ is $\log_2(r)$-generated, we see that $G^1$ contains at most $|G^1|^{\log_2(r)}\leq r^{2\log_2 r}=2^{2(\log_2r)^2}$ distinct subgroups of order $r$. 
In particular, since $\ell>2^{3(\log_2r)^2}$, we see that $R^{\sigma_{i_1}}=\cdots=R^{\sigma_{i_t}}$ for some $t>2^{(\log_2(r))^2}$ and some subset $\{i_1,\ldots,i_t\}$ of size $t$ of $\{1,\ldots,\ell\}$. Therefore $\sigma_{i_1}\sigma_{i_j}^{-1}$ normalises $R$. 
As $1^{\sigma_{i_1}\sigma_{i_j}^{-1}}=1$, $\sigma_{i_1}\sigma_{i_j}^{-1}$ is an automorphism of $R$, for every $j\in \{1,\ldots,t\}$. Since $R$ has at most $|R|^{\log_2(r)}=2^{(\log_2(r))^2}$ automorphisms, we get $\sigma_{i_1}\sigma_{i_j}^{-1}=\sigma_{i_1}\sigma_{i_{j'}}^{-1}$ for two distinct indices $j$ and $j'$. Thus $\sigma_{i_j}=\sigma_{i_{j'}}$ and $G^{i_j}=(G^1)^{\sigma_{i_j}^{-1}}=(G^1)^{\sigma_{i_{j'}}^{-1}}=G^{i_{j'}}$, which is a contradiction.~$_\blacksquare$

\smallskip

From~~\eqref{eq2} and~\eqref{eq1}, we have $$|\mathcal{S}_R|\le 2^{7(\log_2 r)^2+12\log_2r},$$ and the proof of this lemma immediately follows.
\end{proof}
It remains to estimate the number Cayley maps on a group $R$ compatible with a fixed intermediate subgroup $G$ with cyclic point stabilizer $H$.
\begin{lemma}\label{lemma:compatible mappings}
  For every pair of subgroups $R$ and $H$ of $\Sym(r)$ such that $R$ is regular and $H= \langle \gamma \rangle$ is non-identity, cyclic of order less than $r$ and fixing the point $1$, let $$
  \mathcal{R_\gamma}=
  \{(S,\rr)|
  S\subseteq\{2,\dots,r\},\,\rr \text{ cyclic ordering on }S,\, \gamma \in \Aut(CM(R,S,\rr))\}$$ be the set of all Cayley maps on $R$ admitting $\gamma$ as an automorphism. Then $|\mathcal{R}_\gamma|\le
   (r-1)\frac{r}{2} \left\lfloor r/2\right \rfloor! 2^{r}$.
\end{lemma}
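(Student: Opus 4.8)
The plan is to count the pairs $(S,\rr)$ in $\mathcal{R}_\gamma$ by exploiting that $\gamma$ fixes $1$ and, by Lemma~\ref{Necessary condition}, that whenever $\gamma\in\Aut(CM(R,S,\rr))$ the set $S$ (which is the neighbourhood of the identity vertex, i.e.\ a subset of $\{2,\dots,r\}$ once we identify the vertex set with $\{1,\dots,r\}$ and $e$ with $1$) is $\gamma$-invariant and $\rr$ is forced, up to a bounded number of choices, by the cycle structure of $\gamma$ on $S$. Concretely, I would first fix an element $\sigma\in\langle\gamma\rangle$ of prime order $p$ (so $\sigma=\gamma^{k}$ for the appropriate $k$); since $H$ is cyclic of order less than $r$, the number of choices of $\gamma$ is irrelevant here because $\gamma$ is given, but what matters is that $S$ must also be $\sigma$-invariant. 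So $\mathcal{R}_\gamma$ is contained in the set of pairs $(S,\rr)$ with $S$ a $\sigma$-invariant subset of $\{2,\dots,r\}$ and $\rr$ a compatible cyclic ordering, and it suffices to bound the latter.

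Second, I would bound the number of $\sigma$-invariant subsets $S\subseteq\{2,\dots,r\}$: these are exactly the unions of $\sigma$-orbits on $\{2,\dots,r\}$ (the point $1$ being fixed by $\sigma$ and excluded), and since $\sigma$ has at most $\lfloor (r-1)/p\rfloor + (\text{number of fixed points} \le r-1)$ orbits — crudely, at most $r-1$ orbits in total — the number of such subsets is at most $2^{r-1}<2^{r}$. (One can be sharper, but $2^{r}$ suffices for the claimed bound and absorbs the factor.) This handles the $2^{r}$ factor.

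Third, for a fixed admissible $S$, I would bound the number of cyclic orderings $\rr$ of $S$ compatible with $\gamma$. By Lemma~\ref{Necessary condition}, the restriction $\gamma|_S$ is a power of $\rr$ of the same order as $\gamma$; equivalently, $\rr$ is a cyclic permutation of $S$ for which $\gamma|_S$ is a power. If $\gamma|_S$ has order $m>1$, then $\rr$ is determined by choosing an $m$-th root of $\gamma|_S$ inside the cyclic group $\langle\rr\rangle\cong \mathbb{Z}_{|S|}$: the number of generators of a cyclic group of order $|S|$ that have a prescribed power equal to $\gamma|_S$ is at most the number of cyclic orderings $\rr$ with $\langle\rr\rangle\ni\gamma|_S$ acting with the given orbit structure, which forces $\gamma|_S$ to have all orbits of equal size and forces $m\mid |S|$; in that case there are at most $|S|\le r$ choices of $\rr$ for each of at most $|S|/m$ "phase" assignments — I will bound this crudely by $r$. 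The genuinely large contribution is the case $\gamma|_S=\mathrm{id}$, i.e.\ $S\subseteq\mathrm{fix}(\gamma)$; but since $\gamma$ has order less than $r$ and fixes $1$, a counting of fixed points together with the constraint that $\langle\gamma\rangle$ acts on the $(|S|-1)!$ cyclic orderings of $S$ semiregularly (from the semiregularity on oriented edges) shows that the number of such $\rr$ is at most $(|S|-1)!/2 \le \lfloor r/2\rfloor!$ once we also use that $S$ must avoid being too large — here I would invoke that $S\subseteq\mathrm{fix}(\gamma)$ forces $|S|\le r-1$ and a parity/semiregularity argument to get the floor. Multiplying the three contributions — at most $r-1$ choices coming from which power of $\gamma$ to track (or from the orbit-size $m$), times $\frac r2$ from the phase/root count, times $\lfloor r/2\rfloor!$ from the fixed-set orderings, times $2^{r}$ from the subsets — gives $|\mathcal{R}_\gamma|\le (r-1)\frac r2\lfloor r/2\rfloor!\,2^{r}$.

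The main obstacle is the bookkeeping in the third step: pinning down exactly how many cyclic orderings $\rr$ of a given $\gamma$-invariant set $S$ admit $\gamma|_S$ as a power, and in particular isolating the dominant term $\lfloor r/2\rfloor!$ cleanly. The key input making this tractable is that $\langle\rr\rangle$ is cyclic and $\gamma|_S\in\langle\rr\rangle$, so the whole question reduces to elementary counting of roots and generators in cyclic groups, together with the semiregular action of $\langle\gamma\rangle$ on orderings; the factor $2^{r}$ for $\gamma$-invariant subsets and the factor $(r-1)\frac r2$ for the remaining discrete choices are deliberately generous so that no delicate estimate is needed there.
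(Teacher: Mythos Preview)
Your third step is where the argument breaks down, and the gap is not just bookkeeping.

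First, the case you call the ``genuinely large contribution'', namely $\gamma|_S=\mathrm{id}$, never occurs. Lemma~\ref{Necessary condition} says that $\gamma|_S$ has the \emph{same order} as $\gamma$, and $\gamma$ is non-identity, so $\gamma|_S$ is always a nontrivial permutation of $S$. Consequently your attempt to extract the factor $\lfloor r/2\rfloor!$ from ``orderings of a fixed set'' via $(|S|-1)!/2$ is aimed at an empty case, and in any event $(|S|-1)!$ can be as large as $(r-2)!$, which is far bigger than $\lfloor r/2\rfloor!$.

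Second, in the complementary case $\gamma|_S\neq\mathrm{id}$ your claim that there are ``at most $|S|\le r$ choices of $\rr$'' is false. The correct constraint is that $\rr$ commutes with $\gamma|_S$ (since $\gamma|_S\in\langle\rr\rangle$), so $\rr$ lies in $\cent{\Sym(S)}{\gamma|_S}$. Because $\gamma|_S$ is a power of a full cycle, all of its orbits have the same length $l$ (the order of $\gamma$), say there are $k$ of them with $kl=|S|$; the centraliser is then isomorphic to $C_l\wr\Sym(k)$ and has order $l^k k!$, which is typically much larger than $|S|$.

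The paper's proof runs exactly through this centraliser count. The crucial observation you are missing is that $l\ge 2$ forces $k=|S|/l\le (r-1)/2$, so the factorial part of $l^k k!$ is at most $\lfloor r/2\rfloor!$; \emph{that} is the true origin of the factor $\lfloor r/2\rfloor!$. The remaining factors $(r-1)$, $\tfrac{r}{2}$ and $2^r$ come from summing $\binom{n_l}{k}l^k$ over $k$ (and, generously, over $l$), not from the ad hoc ``power of $\gamma$'' and ``phase/root'' choices you describe. Once you use the centraliser bound and $k\le r/2$, the estimate is a routine manipulation; without it, the proof does not go through.
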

\begin{proof}
Let $l$ be the order of $\gamma$.
  From Lemma \ref{Necessary condition}, if  $(S,\rr) \in \mathcal{R}_\gamma$, then  $S^\gamma=S$; thus $S$ is a union of $H $-orbits. 
  Moreover, $\gamma|_S$ is a power of $\rr$;  hence  $\gamma|_S$ is a product of $k$  disjoint cycles all of the same length $l$ fixing no point in $S$. Clearly $kl = |S| <r$. For a fixed $S$ (and hence $k$ and $l$),  $\rr\in \cent{\Sym(S)}{\gamma|_S}$. From routine computations,  $\cent{\Sym(S)}{\gamma|_S}$ is isomorphic to the wreath product $C_l \wr \Sym(k)$. Thus, given $S$, there are at most $l^kk!$ choices for $\rr$.
  
  If $n_l$ is the number of cycles of length $l$ in the cycle decomposition of $\gamma$, then there are ${n_l\choose l}$ choices for $S$ such that $\gamma|_S$ decomposes in $k$ cycles of length $l$. 

  Putting everything together, we have
  \begin{align}\label{Dario}
    |\mathcal{R}_\gamma|
    &\leq \sum_{l=2}^{r-1}\sum_{k=1}^{n_l} \binom{n_l}{k}k!l^k.
  \end{align}
  Of course $ln_l \leq |S|<r$ and hence $n_l<r/l$.

 In what follows, we use the generalized binomial coefficient ${x\choose k}=\frac{1}{k!}\prod_{i=0}^{k-1}(x-i)$. Observe that ${x\choose k}$ is increasing in the  real variable $x\geq k$. Elementary computations show the inequality 
  \begin{equation*}
    \frac{{\frac{r}{l+1}\choose k}k!(l+1)^k}{{\frac{r}{l}\choose k}k!l^k}=\prod_{i=0}^{k-1}\frac{r-i(l+1)}{r-il}\leq 1.
  \end{equation*} 
This gives that the summands appearing in~\eqref{Dario} are  non-increasing in $l$ and hence they can be estimated with $l=2$. We deduce
  \begin{align*}
    |\mathcal{R}_\gamma|
    &\leq \sum_{l=2}^{r-1}\sum_{k=1}^{\lfloor\frac{r}{l}\rfloor}\binom{\lfloor\frac{r}{l}\rfloor}{k}k!l^k
    \leq \sum_{l=2}^{r-1}\sum_{k=1}^{\lfloor\frac{r}{l}\rfloor}\binom{\frac{r}{l}}{k}k!l^k
    \leq \sum_{l=2}^{r-1}\sum_{k=1}^{\lfloor\frac{r}{l}\rfloor}\binom{\frac{r}{2}}{k}k!2^k.
  \end{align*}
Furthermore, an easy computation shows that (for $0\leq k \leq x$)
$\binom{x}{k+1}-\binom{x}{k}\geq0$ if and only if $k<\frac{x}{2}$. Thus we can estimate generalized binomial coefficients with an ``almost central binomial coefficient'':
$\binom{\frac{r}{2}}{k}\leq \binom{\frac{r}{2}}{\lfloor\frac{r}{4}\rfloor}$. Thus
  \begin{align*}
    |\mathcal{R}_\gamma|
    &\leq \sum_{l=2}^{r-1}\sum_{k=1}^{\lfloor\frac{r}{2}\rfloor}\binom{\frac{r}{2}}{\lfloor\frac{r}{4}\rfloor}k!2^k 
    \leq \sum_{l=2}^{r-1}\sum_{k=1}^{\lfloor\frac{r}{2}\rfloor}\binom{\frac{r}{2}}{\lfloor\frac{r}{4}\rfloor}\left\lfloor \frac{r}{2}\right\rfloor!2^{\lfloor \frac{r}{2}\rfloor}\\
&    \leq (r-1)\left\lfloor \frac{r}{2}\right\rfloor 2^{\frac{r}{2}}\left\lfloor\frac{r}{2}\right\rfloor! 2^{\lfloor \frac{r}{2}\rfloor}
\le
(r-1)\left\lfloor \frac{r}{2}\right\rfloor\left\lfloor\frac{r}{2}\right\rfloor! 2^{r}
.\qedhere
  \end{align*}
\end{proof}

\begin{proof}[Proof of Theorem~$\ref{thm:main}$]
Notice that there are $(r-2)!$ Cayley maps with $S=R\setminus \{e\}$ (this is just the number of cyclic orderings $\rr$), the total number of Cayley maps must be greater than that, so combining Lemmas~\ref{lemma2} and~\ref{lemma:compatible mappings}, we deduce that the fraction of Cayley maps on $R$ admitting a group of automorphisms larger than $R$ is less than 
\begin{equation*}
  \frac{((r-1)\frac{r}{2} \left\lfloor r/2\right \rfloor! 2^{r})(2^{7(\log_2 r)^2+12\log_2r})}{(r-2)!},
\end{equation*} 
which goes to 0 when $r\to\infty$.
\end{proof}

\thebibliography{10}
\bibitem{Lub}A.~Lubotzky, Enumerating boundedly Generated Finite Groups, \textit{J. Algebra} \textbf{238} (2001), 194--199.
\bibitem{Biggs} N.~Biggs and A.~T.~White, \textit{Permutation groups and combinatorial structures}, Math. Soc. Lect. Notes \textbf{vol. 33}, Cambridge Univ. Press, Cambridge, 1979.

\bibitem{8}C.~D.~Godsil, GRRs for nonsolvable groups, Algebraic Methods in Graph Theory, (Szeged, 1978), 221--239, \textit{Colloq. Math. Soc. J\'{a}nos Bolyai} \textbf{25}, North-Holland, Amsterdam-New York, (1981).

\bibitem{Gross}J. Gross, T. Tucker, \textit{Topological Graph Theory} \textbf{pg. 113}, John Wiley \& Sons, New York, 1987.

\bibitem{Jajcay93}R.~Jajcay, Automorphism Groups
of Cayley Maps, \textit{J.~of Combinatorial Theory} Series \textbf{B 59} (1993), 297--310.

\bibitem{Jajcay}R.~Jajcay, The Structure of Automorphism Groups
of Cayley Graphs and Maps, \textit{J.~Algebraic Combinatorics} \textbf{12} (2000), 73--84.

\bibitem{Jones}G.~Jones,
Cyclic regular subgroups of primitive permutation groups, \textit{J. Group Theory} \textbf{5} (2002), 403--407. 

\bibitem{Mann}A.~Mann, Finite Groups Containing Many Involutions, \textit{Proc. Amer. Math. Soc.} \textbf{122} (1994), 383--85.
\bibitem{spiga1} J.~Morris, M.~Moscatiello, P.~Spiga, On the asymptotic enumeration of Cayley graphs, \textit{Ann. Mat. Pura Appl.} \textbf{201} (2022),  1417--1461.

\bibitem{spiga2} J.~Morris, P.~Spiga, Asymptotic enumeration of Cayley digraphs, \textit{Israel J. Math.} \textbf{242} (2021), 401--459.

\bibitem{spiga3} P.~Spiga, On the equivalence between a conjecture of Babai-Godsil and a conjecture of Xu concerning the enumeration of Cayley graphs, \textit{Art Discrete Appl. Math.} \textbf{4} (2021), no. 1, Paper No. 1.10, 18 pp. 

\bibitem{spiga4}P. Spiga, Finite transitive groups having many suborbits of cardinality at most two and an application to the enumeration of Cayley graphs, \textit{Canadian J. Math.}, to appear.

\bibitem{xia2022} B.~Xia, S.~Zheng, Asymptotic enumeration of graphical regular representations,  preprint, \href{https://arxiv.org/abs/2212.01875}{https://arxiv.org/abs/2212.01875}.


\end{document}